\documentclass[12pt,twoside]{article} 
\usepackage{amsthm,geometry}
\usepackage{mathtools}
\newtheorem{Theorem}{Theorem}[section]
\newtheorem{Lemma}{Lemma}[section]
\newtheorem{Corollary}{Corollary}[Lemma]

\setcounter{page}{113} 

\setlength{\textheight}{21.6cm} 

\setlength{\textwidth}{14cm} 

\setlength{\oddsidemargin}{1cm} 

\setlength{\evensidemargin}{1cm} 

\pagestyle{myheadings} 

\thispagestyle{empty}

\markboth{\small{Madieyna Diouf}}{\small{Primes between two consecutive cubes}}

\date{} 

\begin{document} 

\centerline{\bf International Journal of Contemporary Mathematical Sciences}

\centerline{\bf Vol. 15, 2020, no. 3, 113 - 120}

\centerline{\bf HIKARI Ltd, \ www.m-hikari.com}

\centerline{\bf https://doi.org/10.12988/ijcms.2020.91439}

\centerline{} 

\centerline{} 

\centerline{\Large{\bf The Gap $g_n$ Between Two Consecutive Primes}}

\centerline{}

\centerline{\Large{\bf Satisfies $g_n=\mathcal{O}({p_n}^{2/3})$}} 

\centerline{} 

\centerline{\bf {Madieyna Diouf}} 

\centerline{} 

\centerline{School of Mathematical and Statistical Sciences } 

\centerline{Arizona State University} 

\centerline{P.O. Box 871804} 

\centerline{Tempe, AZ 85287-1804, USA} 

\centerline{} 
{\footnotesize This article is distributed under the Creative Commons by-nc-nd Attribution License. Copyright $\copyright$ 2020 Hikari Ltd.}

\begin{abstract} 
The following is proven using arguments that do not revolve around the Riemann Hypothesis or Sieve Theory. If $p_n$ is the $n^{\rm th}$ prime and $g_n=p_{n+1}-p_n$, then  $g_n=\mathcal{O}({p_n}^{2/3})$. 
\end{abstract} 

{\bf Mathematics Subject Classification:} 51-01 \\

{\bf Keywords:} prime numbers

\section{Introduction} 
The study of maximal gaps between consecutive primes is an important subject that is actively pursued and the Bertrand's postulate \cite{Bertrand} is one of its first consequences. In $1850$, Chebyshev proved the Bertrand’s postulate \cite{Tchebychev}, and  P. Erd\" os presented a simplified proof in $1932$  \cite{Erdos}. Strong results were also obtained in the generalizations of Bertrand's Postulate. In  $2006$, M. El Bachraoui proved the existence of a prime in the interval $[2n, 3n]$ \cite{Bachraoui}.  In $2011$, Andy Loo exhibited a proof that shows not only the existence of a prime between $3n$ and $4n$, but also the infinitude of the number of primes in this interval when $n$ goes to infinity  \cite{Andy}. Pierre Dusart gave the best known result in this category when he improved in $2016$ his previous work by showing that there is a prime between $x$ and $(x+x/(25{\log}^2 x))$ for $x\geq468991632$ \cite{Pierre}.\\

\lq\lq On $25th$ October $1920$ G. H. Hardy read Cram\'er's paper \textit{\lq\lq On the distribution of primes"} to the Cambridge Philosophical Society. Here Cram\'er develops a statistical approach to this question showing that for any $\epsilon>0$
\begin{center}
$ p_{n+1}-p_n=O(p^{\epsilon}$)
\end{center}
for \lq most\rq \: $p_n$: in fact for all but at most $x^{1-3\epsilon/2}$ of the primes $p_n\leq x$.\rq\rq \cite{Andrew}. As a result of the Prime Number Theorem alone, we have
$p_{n+1}-p_n<\epsilon p_n$ for all $\epsilon>0$.\\
By the  Prime Number Theorem with error term, we obtain 
\begin{center}
$p_{n+1}-p_n<\frac{p_n}{{(\log p_n)^c}}$ \textup{\:  for some positive constant $c$}.
\end{center}

Significant works have been done on the upper bound of the gap between consecutive primes by various authors without assuming an unproved hypothesis. Hoheisel was the first to show in $1930$ the existence of a constant $\delta>0$ (mainly $\delta = 1/33000$) such that $p_{n+1}-p_n = O({p_n}^{1-\delta})$ \cite{Hoheisel}. Heilbronn \cite{Heibronn}, and Tchudakoff \cite{Tchuda}, both improved on the value of $\delta$. Ingham \cite{Ingham} made a significant progress that contributed to the first solutions surrounding the problem of existence of a prime between two consecutive cubes.\\

Based on observations uniquely centered on the midpoint $m$ of two consecutive primes $p_n$, $p_{n+1}$ and the largest odd multiple of $p_n$ not exceeding $m^2$, we exercise basic proof techniques to show that the gap $g_n$ between two consecutive primes satisfies $g_n=O({p_n}^{2/3})$. It is indeed shown precisely that 
\begin{center}
${g_n}^3<16{p_n}^2$.
\end{center}

Better results are obtained under the assumption of the Riemann Hypothesis. Harald Cram\'er proved that if the Riemann hypothesis holds, then the gap $g_n$ satisfies $g_n=O(\sqrt{p_n}\log p_n)$.  Results of the form $g_n=\mathcal{O}({p_n}^{\theta})$ with different $\theta <1$ were given in the past. Among these where $\theta$ is close to $1/2$ are namely from \cite{Hu} $\theta=7/12$, Dr Brown gave an alternative proof, \cite{He} $\theta=7/12$, \cite{Mo} $\theta=1051/1920$, \cite{Qi} $\theta=6/11$,   The best unconditional bound is known to Baker, Harman and Pintz, who proved the existence of $x_0$ such that there is a prime in the interval $[x, x+O(x^{21/40})]$ for $x>x_0$, \cite{Baker}.\\ 

The key ideas that allow us to give an explicit and  unconditional result using methods that can be easy exercises in first course in elementary number theory can be summarized as following. The use of the midpoint $m$ of the two consecutive primes $p_n$ and $p_{n+1}$, combined with few other elementary manipulations of the largest odd multiple of $p_n$ not exceeding $m^2$.\\
Lemma $4$ establishes the relation 
\begin{align}
g_n&<2\sqrt{2p_{n}(c_n+1)}.
\end{align}
Proceedings from Lemma $1$ to Lemma $3$ lead to a modest upper bound of $c_n$. 
\begin{align}
c_n&<p_n/g_n.
\end{align}
Theorem $1$ is a combination of $(1)$ and $(2)$, that gives the desired result and implies the existence of a prime between two consecutive cubes for all positive integer $n>16$. 

\section{Main results}
Consider two consecutive primes $p_n$ and $p_{n+1}$, set $m$ to be their point fixed, once for all. \\
There exists a positive integer $b$ such that $m-b=p_{n}$ and $m+b=p_{n+1}$.
\begin{align}
(m-b)(m+b)&=p_{n}p_{n+1}.\\
m^2-b^2&=p_{n}p_{n+1}.\\
m^2-p_{n}p_{n+1}&=b^2.\\
m^2-p_{n}p_{n+1}&>0.\\
p_{n}p_{n+1}&<m^2.
\end{align}
\textbullet \: Let $\alpha p_n$ denote the largest odd multiple of $p_n$ not exceeding $m^2$.\\
\textbullet \: Let $\beta p_{n+1}$ denote the largest odd multiple of $p_{n+1}$ not exceeding $m^2$.\\
\textbullet \: Set $c_n$ to be the number of odd multiples of $p_n$ between $p_np_{n+1}$ and $m^2$.\\
\textbullet \: Set $c_{n+1}$ to be the number of odd multiples of $p_{n+1}$ between $p_np_{n+1}$ and $m^2$.\\\\
Combining $(7)$ with the previous four sentences shows that $\alpha p_n$ and $\beta p_{n+1}$ are between $p_np_{n+1}$ and $m^2$, and 
\begin{align}
\alpha p_n&=p_n(p_{n+1}+2c_n).\\
\beta p_{n+1}&=p_{n+1}(p_n+2c_{n+1}).\\
\beta p_{n+1}-\alpha p_n&=2(p_{n+1}c_{n+1}-p_nc_n).
\end{align}
\begin{Lemma}
 $c_{n+1}\leq c_n$.
\end{Lemma}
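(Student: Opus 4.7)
My approach is to derive an explicit formula for each of $c_n$ and $c_{n+1}$, and then conclude by monotonicity of the floor function. Assume $p_n\ge 3$ so that both primes are odd (the case $p_n=2$ makes $m$ a half-integer and so falls outside the standing setup in which $b$ is a positive integer). Because $p_n$ is odd, an integer $p_n j$ is odd precisely when $j$ is odd, and so the odd multiples of $p_n$ at or above $p_np_{n+1}$ are exactly the numbers $p_n(p_{n+1}+2k)$ for $k=0,1,2,\dots$, since $p_{n+1}$ is itself odd.

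Combining this parametrization with the defining maximality of $\alpha p_n$ in equation $(8)$, and with the identity $m^2-p_np_{n+1}=b^2$ from $(5)$, I would obtain
\[
c_n=\left\lfloor\frac{b^2}{2p_n}\right\rfloor,\qquad c_{n+1}=\left\lfloor\frac{b^2}{2p_{n+1}}\right\rfloor.
\]
From here the lemma is immediate: $p_n<p_{n+1}$ gives $b^2/(2p_n)\ge b^2/(2p_{n+1})$, and the floor function preserves this inequality.

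An equivalent and perhaps slicker write-up avoids floors altogether by producing an explicit injection from the multiples counted by $c_{n+1}$ into those counted by $c_n$. For each $j\in\{1,\dots,c_{n+1}\}$, the number $p_n(p_{n+1}+2j)$ is odd, strictly exceeds $p_np_{n+1}$, and satisfies $p_n(p_{n+1}+2j)<p_{n+1}(p_n+2j)\le m^2$, so it is indeed an odd multiple of $p_n$ in the prescribed range and hence is counted by $c_n$. Distinct $j$'s produce distinct multiples, so $c_n\ge c_{n+1}$.

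The argument is essentially one-line once the setup is parsed, and I expect the only real (mild) obstacle to be the parity bookkeeping: one must verify carefully that the odd multiples of $p_n$ lying in $(p_np_{n+1},m^2]$ are spaced exactly $2p_n$ apart starting from $p_n(p_{n+1}+2)$ (and similarly for $p_{n+1}$), which is what underwrites both the floor formulas and the injection.
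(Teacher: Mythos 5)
Your proof is correct, and both of your variants reach the lemma by routes that differ in packaging from the paper's. The paper argues by contradiction: assuming $c_{n+1}\geq c_n+1$, it chains $(9)$ into $\beta p_{n+1}\geq p_{n+1}(p_n+2c_n)+2p_{n+1}\geq \alpha p_n+2p_n>m^2$, contradicting the maximality defining $\beta p_{n+1}$. Your closed forms $c_n=\lfloor b^2/(2p_n)\rfloor$ and $c_{n+1}=\lfloor b^2/(2p_{n+1})\rfloor$ are a genuinely different and more informative device: they follow correctly from $(5)$ and the parametrization $p_n(p_{n+1}+2k)\leq m^2 \iff k\leq b^2/(2p_n)$, they make the lemma a one-line consequence of monotonicity of the floor, and they quantify the gap $c_n-c_{n+1}$ rather than merely its sign (at the cost of the parity bookkeeping you flag, which does check out for odd primes, and which the paper sidesteps by working axiomatically from $(8)$ and $(9)$). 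Your injection variant, by contrast, is essentially the contrapositive of the paper's argument made direct: the key inequality $p_n(p_{n+1}+2j)<p_{n+1}(p_n+2j)$ is exactly the comparison the paper uses in passing from $(13)$ to $(15)$, just applied to each $j\leq c_{n+1}$ instead of only to $j=c_n+1$. Your exclusion of $p_n=2$ is a reasonable (and arguably necessary) hygiene step that the paper omits, since $m$ is not an integer there and the whole setup degenerates.
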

\begin{proof}
In the contrary, suppose that $c_{n+1}>c_n$.
\begin{align}
\alpha p_n&=p_n(p_{n+1}+2c_n) \text{ \: \: \: \:  (by $(8))$}.\\
\beta p_{n+1}&=p_{n+1}(p_n+2c_{n+1}) \text{ \: \: (by $(9))$}.\\
&\geq p_{n+1}(p_n+2(c_n+1)).\\
&\geq p_{n+1}(p_n+2c_n)+2p_{n+1}.\\
&\geq \alpha p_n+2p_{n+1}.\\
&\geq \alpha p_n+2p_n.\\
\beta p_{n+1}&> m^2 \textup{\: (by definition, $\alpha p_n+2p_n>m^2$.)}
\end{align}
Observe that $(17)$ is in contradiction with the definition of $\beta p_{n+1}$, that is the largest odd multiple of $p_{n+1}$ not exceeding $m^2$; Therefore,  $c_{n+1}\leq c_n$.\\
\end{proof}
\centerline{\textbullet \: Set $X_n=(m^2 - p_n) mod(2p_n)$}
\centerline{ \: \: \: \: \: \: \textbullet \: and $X_{n+1}=(m^2 - p_{n+1}) mod(2p_{n+1}).$ }
\begin{Lemma}
$\beta p_{n+1}-\alpha p_n=X_n-X_{n+1}.$
\end{Lemma}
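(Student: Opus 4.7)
The plan is to show that each residue equals the distance from $m^2$ down to the corresponding largest odd multiple, i.e., $X_n = m^2 - \alpha p_n$ and $X_{n+1} = m^2 - \beta p_{n+1}$; subtracting the two then gives $\beta p_{n+1} - \alpha p_n = X_n - X_{n+1}$ at once.

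To establish the first identity, I would first observe that consecutive odd multiples of $p_n$ differ by $2p_n$, so by the maximality built into the definition of $\alpha p_n$ we have $0 \leq m^2 - \alpha p_n < 2p_n$; otherwise $(\alpha+2)p_n$ would be an odd multiple of $p_n$ still not exceeding $m^2$, contradicting the choice of $\alpha$. Next, writing the odd integer $\alpha$ as $2k+1$ gives $\alpha p_n = 2k p_n + p_n \equiv p_n \pmod{2p_n}$, hence $m^2 - \alpha p_n \equiv m^2 - p_n \pmod{2p_n}$. Uniqueness of the canonical residue in $[0,\,2p_n)$ then forces $m^2 - \alpha p_n = (m^2 - p_n)\bmod(2p_n) = X_n$, i.e., $\alpha p_n = m^2 - X_n$.

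The same three-line reasoning, with $n+1$ in place of $n$ and with the oddness of $\beta$ used in place of the oddness of $\alpha$, yields $\beta p_{n+1} = m^2 - X_{n+1}$. Subtracting the two identities gives
\begin{align*}
\beta p_{n+1} - \alpha p_n &= (m^2 - X_{n+1}) - (m^2 - X_n) = X_n - X_{n+1},
\end{align*}
which is the claim.

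There is no real obstacle here; the argument is a direct unpacking of the definitions. The only point worth flagging is that the oddness of the multipliers $\alpha$ and $\beta$ is essential: it is precisely what identifies $\alpha p_n$ with the residue class of $p_n$ modulo $2p_n$ (and similarly for $\beta p_{n+1}$), and so it is what aligns the two descriptions of $X_n$ and $X_{n+1}$.
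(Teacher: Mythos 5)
Your proof is correct and follows the same route as the paper: identify $X_n = m^2 - \alpha p_n$ and $X_{n+1} = m^2 - \beta p_{n+1}$, then subtract. The only difference is that the paper simply asserts the "distance" interpretation of the residues, whereas you actually justify it via the congruence $\alpha p_n \equiv p_n \pmod{2p_n}$ and the uniqueness of the canonical residue, which is a welcome addition rather than a deviation.
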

\begin{proof}
The value $X_n$ gives the distance between $m^2$ and the largest odd multiple of $p_n$ not exceeding $m^2$. Hence,
$m^2-X_n$ is the largest odd multiple of $p_n$ not exceeding $m^2$; That is $\alpha p_n$ by definition. 
\begin{align}
\alpha p_n&= m^2 - X_n.
\end{align}
Similarly, $X_{n+1}$ represents the distance between $m^2$ and the largest odd multiple of $p_{n+1}$ not exceeding $m^2$. Thus, 
$m^2-X_{n+1}$ is the largest odd multiple of $p_{n+1}$ not exceeding $m^2$; That is $\beta p_{n+1}$ by definition. 
\begin{align}
\beta p_{n+1}&= m^2 - X_{n+1}.
\end{align}
$(18)$ and $(19)$ give
\begin{align}
\beta p_{n+1}-\alpha p_n&=X_n-X_{n+1}.
\end{align}
\end{proof}
\begin{Corollary}   
 $\beta p_{n+1}-\alpha p_n<2p_n.$
\end{Corollary}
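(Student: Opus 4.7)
The plan is to obtain this corollary as a nearly immediate consequence of Lemma 2, using only the definition of the modular residues $X_n$ and $X_{n+1}$. Lemma 2 rewrites the quantity of interest as the difference
\[
\beta p_{n+1}-\alpha p_n = X_n - X_{n+1},
\]
so the task reduces to bounding this difference.

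Next I would use the definitions $X_n=(m^2-p_n)\bmod(2p_n)$ and $X_{n+1}=(m^2-p_{n+1})\bmod(2p_{n+1})$. By the very meaning of reduction modulo $2p_n$ one has $0\le X_n<2p_n$ (the strict upper bound is part of the definition of a residue), and by the analogous statement for $X_{n+1}$ one has $X_{n+1}\ge 0$. Combining these two facts yields
\[
\beta p_{n+1}-\alpha p_n = X_n - X_{n+1} \;\le\; X_n \;<\; 2p_n,
\]
which is exactly the inequality claimed.

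There is no real obstacle here: the corollary is essentially a reformulation of Lemma 2 together with the trivial range bound for a residue. The only thing worth saying clearly is that the inequality holds regardless of sign, so one does not need to invoke Lemma 1 to guarantee positivity of the left-hand side; if $\beta p_{n+1}-\alpha p_n$ happens to be negative or zero, the bound $< 2p_n$ is automatic, and if it is positive, the estimate above applies. This makes the proof a two-line argument that is safe to leave in full rather than sketch.
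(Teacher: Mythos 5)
Your proof is correct and follows essentially the same route as the paper: invoke Lemma 2.2 to write $\beta p_{n+1}-\alpha p_n=X_n-X_{n+1}$, then use $X_n<2p_n$ and $X_{n+1}\ge 0$. The only (harmless) difference is that the paper asserts $X_{n+1}>0$ where your weaker $X_{n+1}\ge 0$ already suffices.
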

\begin{proof}
By Lemma $2.2$
\begin{align}
\beta p_{n+1}-\alpha p_n&=X_n-X_{n+1}.\\
\text{Since\: \: } X_n= (m^2 - p_n) mod(2p_n)&<2p_n\\
\text{and\: \: }  X_{n+1}=(m^2 - p_{n+1}) mod(2p_{n+1})&>0,\\
\text{then\: \: } \beta p_{n+1}-\alpha p_n&<2p_n. 
\end{align}
\end{proof}
\begin{Corollary}   
 $\beta p_{n+1}-\alpha p_n=2c_ng_n.$
\end{Corollary}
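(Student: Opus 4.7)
The plan is to compute $\beta p_{n+1} - \alpha p_n$ from the algebraic identity in equation $(10)$ and then to use Lemma $2.1$ to simplify. Starting from $(10)$,
\[
\beta p_{n+1} - \alpha p_n = 2(c_{n+1} p_{n+1} - c_n p_n),
\]
I would add and subtract $c_n p_{n+1}$ inside the parentheses to rewrite this as
\[
\beta p_{n+1} - \alpha p_n = 2 c_n (p_{n+1} - p_n) + 2(c_{n+1} - c_n) p_{n+1} = 2 c_n g_n + 2(c_{n+1} - c_n) p_{n+1}.
\]
So the corollary reduces to the statement that the correction term $2(c_{n+1} - c_n) p_{n+1}$ vanishes, i.e., that $c_{n+1} = c_n$.

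Lemma $2.1$ supplies the inequality $c_{n+1} \le c_n$; the remaining task is to upgrade this to equality. For the reverse inequality $c_{n+1} \ge c_n$, I would exhibit an odd multiple of $p_{n+1}$ at or below $m^2$ whose coefficient is at least $p_n + 2 c_n$. The natural candidate is $p_{n+1}(p_n + 2 c_n)$, which differs from $\alpha p_n = p_n(p_{n+1} + 2 c_n)$ by exactly $2 c_n g_n$; if $\alpha p_n + 2 c_n g_n \le m^2$, then by the maximality of $\beta p_{n+1}$ we would obtain $\beta p_{n+1} \ge p_{n+1}(p_n + 2 c_n)$, i.e., $c_{n+1} \ge c_n$.

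The main obstacle is therefore the inequality $2 c_n g_n \le m^2 - \alpha p_n = X_n$. Since $X_n < 2 p_n$ by the definition of the modulus, a sufficient condition would be $c_n g_n < p_n$, but that is essentially the bound the paper ultimately intends to deduce from the present corollary, so care will be needed to avoid circularity. The cleanest route is likely a contradiction argument mirroring Lemma $2.1$: assume $c_n > c_{n+1}$, and trace the failure back to a violation of the defining inequality for either $\alpha p_n$ or $\beta p_{n+1}$, analogously to how Lemma $2.1$ derives the contradiction $\beta p_{n+1} > m^2$.
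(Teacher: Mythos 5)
Your reduction is correct and is the right way to see what this corollary actually asserts: given $(10)$, the identity $\beta p_{n+1}-\alpha p_n=2c_ng_n$ is equivalent to $c_{n+1}=c_n$, and Lemma $2.1$ supplies only $c_{n+1}\leq c_n$. But the proposal never closes the remaining half, and the gap is genuine. The condition you need is a \emph{lower} bound $X_n\geq 2c_ng_n$, whereas $X_n<2p_n$ is an upper bound on $X_n$; so $c_ng_n<p_n$ does not imply $2c_ng_n\leq X_n$ (both quantities being below $2p_n$ says nothing about their relative order), and in any case $c_n<p_n/g_n$ is Lemma $2.3$, which the paper derives \emph{from} this corollary --- the circularity you flag is real. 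The fallback you suggest, a contradiction argument mirroring Lemma $2.1$, also fails: assuming $c_{n+1}\leq c_n-1$, maximality of $\beta p_{n+1}$ gives $p_{n+1}(p_n+2c_{n+1}+2)>m^2\geq p_n(p_{n+1}+2c_n)$, hence $p_{n+1}(c_{n+1}+1)>p_nc_n$; combined with $c_{n+1}+1\leq c_n$ this yields only $p_{n+1}c_n>p_nc_n$, which is no contradiction. The asymmetry that drives Lemma $2.1$ (stepping up by $2p_{n+1}>2p_n$ necessarily overshoots $m^2$) does not reverse.

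You should also know that the paper's own proof does not fill this gap either. It observes that $\beta p_{n+1}-\alpha p_n=X_n-X_{n+1}$ is an expression ``solely in terms of $p_n$ and $p_{n+1}$,'' concludes that the difference ``does not change whether $c_{n+1}=c_n$ or $c_{n+1}<c_n$,'' and on that basis substitutes $c_{n+1}=c_n$ into $(10)$. But $c_n$ and $c_{n+1}$ are themselves determined by $p_n$ and $p_{n+1}$; they are not free parameters one may set equal, so ``letting $c_{n+1}=c_n$'' assumes exactly the equality that needs proof. Your analysis is therefore more informative than the printed proof: it isolates $c_{n+1}=c_n$ as the real content of the corollary, shows that the only visible route to it presupposes the bound $c_n<p_n/g_n$ that the paper wants to deduce from it, and thereby locates precisely where the paper's argument breaks down.
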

\begin{proof}
It is proven unconditionally in Lemma $2.2$  that 
\begin{align*}
\beta p_{n+1}-\alpha p_n&=X_n-X_{n+1}.
\end{align*}
 The result above  is obtained without reference to $c_{n+1}=c_n$ or $c_{n+1}<c_n$; Therefore,
\begin{align}
\text{if $c_{n+1}=c_n$, then \: } \beta p_{n+1}-\alpha p_n &=X_n-X_{n+1};\\
\text{Otherwise (that is if $c_{n+1}<c_n$), \: } \beta p_{n+1}-\alpha p_n &=X_n-X_{n+1}.
\end{align}
Moreover, 
\begin{align}
X_n-X_{n+1} &=(m^2 - p_n) mod(2p_n)-(m^2 - p_{n+1}) mod(2p_{n+1}).\\
X_n-X_{n+1} &=((\frac{p_n+p_{n+1}}{2})^2 - p_n) mod(2p_n)-((\frac{p_n+p_{n+1}}{2})^2 - p_{n+1}) mod(2p_{n+1}).
\end{align}
It is clear from $(28)$ that $X_n-X_{n+1}$ is independent of $c_n$ and $c_{n+1}$ and it is solely in terms of the given consecutive primes $p_n$ and $p_{n+1}$. \\
Therefore, the value $X_n-X_{n+1}$ does not change whether $c_{n+1}=c_n$ or $c_{n+1}<c_n$.\\
The previous sentence with $(25)$ and $(26)$ imply that \\
\centerline{a)The quantity $\beta p_{n+1}-\alpha p_n$ if $c_{n+1}=c_n$, is the same as it would be if $c_{n+1}<c_n$.}
\centerline{Statement a) above combined with Lemma $2.1$, imply the following.}
\centerline{b)The quantity $\beta p_{n+1}-\alpha p_n$ can be obtaining by letting $c_{n+1}=c_n$. \: \: \: \: \: \: \: \: }
\begin{align}
\text{Thus, from \:  } \beta p_{n+1}-\alpha p_n&=2(p_{n+1}c_{n+1}-p_nc_n) \text{\: \: (by $(10)$)}.\\
\text{We obtain\: \: } \beta p_{n+1}-\alpha p_n&=2(p_{n+1}c_n-p_nc_n) \text{\: \: \: \: (Stat b) $c_{n+1}=c_n)$}.\\
\text{So that\: \: } \beta p_{n+1}-\alpha p_n &=2c_ng_n. 
\end{align}
\end{proof}
\begin{Lemma}
 $c_n<p_n/g_n.$
\end{Lemma}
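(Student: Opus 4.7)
The plan is to read the bound $c_n < p_n/g_n$ directly off the two corollaries of Lemma 2.2. First, Corollary 2.1 supplies the strict upper bound $\beta p_{n+1} - \alpha p_n < 2p_n$, extracted from the estimates $X_n < 2p_n$ and $X_{n+1} > 0$. Second, Corollary 2.2 rewrites this same difference as $\beta p_{n+1} - \alpha p_n = 2 c_n g_n$, using (10) together with the observation in its proof that $X_n - X_{n+1}$ depends only on the primes $p_n$ and $p_{n+1}$, so (10) may legitimately be evaluated under $c_{n+1} = c_n$.

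Chaining the two statements gives $2 c_n g_n < 2 p_n$, and dividing by the positive quantity $2 g_n$ delivers $c_n < p_n/g_n$. The degenerate case $c_n = 0$ is handled automatically since the right-hand side is positive. There is no genuine obstacle in this step; all of the substantive work has already been absorbed into Lemma 2.2 and its two corollaries, so the present lemma reduces to a one-line synthesis. The only things to double-check are the positivity of $g_n$ (immediate, since $p_{n+1} > p_n$) and the fact that the inequality from Corollary 2.1 is strict (so that the conclusion is a strict bound, as stated).

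The resulting inequality is the relation (2) advertised in the introduction, and once in hand it combines with the Lemma 4 bound (1), namely $g_n < 2\sqrt{2 p_n (c_n + 1)}$, to yield $g_n^3 < 16 p_n^2$ and hence $g_n = \mathcal{O}(p_n^{2/3})$ in Theorem 1. In short, this lemma is the last piece needed before the final substitution.
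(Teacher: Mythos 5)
Your proposal matches the paper's proof exactly: both chain Corollary 2.2.1 ($\beta p_{n+1}-\alpha p_n<2p_n$) with Corollary 2.2.2 ($\beta p_{n+1}-\alpha p_n=2c_ng_n$) to get $2c_ng_n<2p_n$ and then divide by $2g_n>0$. No substantive difference; all the weight rests on the two corollaries, exactly as in the paper.
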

\begin{proof}
\begin{align}
 \beta p_{n+1}-\alpha p_n&<2p_n \text{\: \: \:(Corollary $2.2.1$).}\\
 \beta p_{n+1}-\alpha p_n&=2c_ng_n \text{\: (Corollary $2.2.2$).}\\
\text{$(32)$ and $(33)$ imply that\: \:  \: \: \: } 2c_ng_n&<2p_n.\\
c_n&<p_n/g_n.
\end{align}
\end{proof}
\begin{Lemma}
$g_n<2\sqrt{2p_{n}(c_n+1)}.$
\end{Lemma}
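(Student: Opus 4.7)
The plan is to use the maximality of $\alpha p_n$ as an odd multiple of $p_n$ bounded by $m^2$, combined with the identity $m^2 - p_np_{n+1} = b^2 = (g_n/2)^2$ which was established in equations $(5)$--$(7)$.

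First I would observe that, since $\alpha p_n$ is the largest odd multiple of $p_n$ not exceeding $m^2$, the next odd multiple of $p_n$, namely $\alpha p_n + 2p_n$, must exceed $m^2$. Using the formula $\alpha p_n = p_n(p_{n+1} + 2c_n)$ from $(8)$, this yields
\begin{align*}
m^2 < p_n(p_{n+1} + 2c_n) + 2p_n = p_n\bigl(p_{n+1} + 2(c_n+1)\bigr).
\end{align*}

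Next I would subtract $p_n p_{n+1}$ from both sides and use $m^2 - p_n p_{n+1} = b^2$ (from $(5)$):
\begin{align*}
b^2 = m^2 - p_n p_{n+1} < p_n \bigl(p_{n+1} + 2(c_n+1)\bigr) - p_n p_{n+1} = 2p_n(c_n+1).
\end{align*}
Since $b = (p_{n+1}-p_n)/2 = g_n/2$, this becomes $g_n^2/4 < 2p_n(c_n+1)$, i.e. $g_n^2 < 8p_n(c_n+1)$, and taking square roots gives the desired bound $g_n < 2\sqrt{2p_n(c_n+1)}$.

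There is no real obstacle here; the lemma follows purely from the definition of $\alpha p_n$ as the largest odd multiple of $p_n$ below $m^2$ together with the Pythagorean-like identity $m^2 - b^2 = p_n p_{n+1}$. The only thing one needs to be slightly careful about is that the inequality $m^2 < \alpha p_n + 2p_n$ is strict, which is automatic because $m^2$ is not itself an odd multiple of $p_n$ (indeed, $p_n \mid m^2$ would force $p_n \mid m$, contradicting $p_n = m-b < m$ and $p_n$ prime when $b<p_n$, and more simply $m^2 > p_n p_{n+1} > \alpha p_n$ already shows $m^2 \neq \alpha p_n$; the next odd multiple then lies at $\alpha p_n + 2p_n$).
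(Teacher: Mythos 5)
Your proof is correct and follows essentially the same route as the paper: both start from the maximality of $\alpha p_n$ (so $\alpha p_n + 2p_n > m^2$), subtract $p_n p_{n+1}$, and identify $m^2 - p_n p_{n+1}$ with $(g_n/2)^2$ — you cite the identity $m^2 - p_np_{n+1} = b^2$ from $(5)$ directly, while the paper re-derives it by writing $p_{n+1} = 2m - p_n$ and completing the square to get $(m-p_n)^2$. The closing worry about strictness is unnecessary (the next odd multiple after the largest one not exceeding $m^2$ must exceed $m^2$ by definition), but it does no harm.
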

\begin{proof}
We have $\alpha p_n=p_n(p_{n+1}+2c_n)$ is the largest odd multiple of $p_n$ not exceeding $m^2$.  
\begin{align}
p_{n}(p_{n+1}+2c_n)+2p_{n} &> m^2.\\ 
m^2 &<  p_{n}(p_{n+1}+2c_n+2) .\\
m^2-p_np_{n+1} &<  2p_{n}(c_n+1) .\\
m^2-p_n(2m-p_n) &<  2p_{n}(c_n+1) .\\
m^2-2mp_n+{p_n}^2 &<  2p_{n}(c_n+1) .\\
(m-p_n)^2 &<  2p_{n}(c_n+1) .\\
(g_n/2)^2 &<  2p_{n}(c_n+1) .\\
(g_n/2)^2 &<  2p_{n}(c_n+1) .\\
g_n&<2\sqrt{2p_{n}(c_n+1)}.
\end{align}
\end{proof}
\begin{Theorem}
$g_n=\mathcal{O}({p_n}^{2/3})$.
\end{Theorem}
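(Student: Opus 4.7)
The plan is to combine the two inequalities highlighted in the introduction, namely $g_n<2\sqrt{2p_n(c_n+1)}$ from Lemma $2.4$ and $c_n<p_n/g_n$ from Lemma $2.3$, and then to eliminate $c_n$ by substitution to obtain a pure inequality relating $g_n$ and $p_n$.

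First, I would square the bound from Lemma $2.4$ to get $g_n^2<8p_n(c_n+1)$. Next, I would use Lemma $2.3$ to replace $c_n+1$ by something smaller than $p_n/g_n+1$, yielding
\begin{align*}
g_n^2 &< 8p_n\!\left(\frac{p_n}{g_n}+1\right)=\frac{8p_n^2}{g_n}+8p_n.
\end{align*}
Multiplying through by $g_n$ then gives the cubic inequality
\begin{align*}
g_n^3 &< 8p_n^2+8p_n g_n.
\end{align*}

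The remaining step is to absorb the $8p_n g_n$ term, and for this I would invoke Bertrand's postulate (cited at the beginning of the introduction), which gives $p_{n+1}<2p_n$ and therefore $g_n=p_{n+1}-p_n<p_n$. Substituting this into the right-hand side produces $g_n^3<8p_n^2+8p_n^2=16p_n^2$, which is precisely the explicit inequality ${g_n}^3<16{p_n}^2$ advertised in the introduction. Taking cube roots yields $g_n<2^{4/3}\,p_n^{2/3}$, hence $g_n=\mathcal{O}(p_n^{2/3})$.

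I do not expect any serious obstacle: the entire argument is a substitution followed by a trivial application of Bertrand's postulate. The only point requiring minor care is ensuring the strict inequality $c_n+1\le p_n/g_n+1$ is handled cleanly (since Lemma $2.3$ is strict and all quantities are positive integers, no rounding issue actually arises). The output is explicit, which incidentally also settles the existence of a prime between any two consecutive cubes for $n>16$, as already announced in the introduction.
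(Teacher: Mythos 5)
Your derivation is correct and follows essentially the same route as the paper: combine Lemma $2.3$ and Lemma $2.4$, eliminate $c_n$, and arrive at the explicit bound ${g_n}^3<16{p_n}^2$. The only cosmetic difference is that the paper folds the \lq\lq$+1$\rq\rq\ into the bound at the start by writing $c_n+1<2p_n/g_n$ (which implicitly uses $g_n\leq p_n$, the same fact you draw from Bertrand's postulate), whereas you absorb the extra $8p_ng_n$ term at the end; both land on the identical final inequality.
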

\begin{proof}
\begin{align}
\textup{By Lemma $2.3$,\: \: \: \: \: \: }  c_n&<{p_n}/{g_n}; \text{\: That is \: } c_n+1<{2p_n}/{g_n}.\\
\textup{By Lemma $2.4$,\: \: \: \: \: \: }   g_n&<2\sqrt{2p_n(c_n+1)}.\\
\textup{With $(45)$ and $(46 )$,\:  \: \: \:  \: \: }  g_n&<2\sqrt{2p_n\frac{2p_n}{g_n}}.\\
{g_n}^3&<16{p_n}^2.\\
g_n&\ll {p_n}^{2/3}.
\end{align}
\end{proof}

\section{Primes between two consecutive cubes for all positive integers}
Let $K=16^{1/3}$ and let $N>K^3$ (that is $N>16$) be a positive integer. \\Let $p_n$  be the largest prime less than $N^3$. Then 
\begin{align}
p_n<N^3 < p_{n+1}&. \\
p_{n+1} &< K p_n^{2/3} + p_n \text{\: (by $(48)$)}.\\
\textup{Since \: } p_n&<N^3 \text{\: \: \: \: \: \:  \: \:(by $(50)$)},\\
\textup{then \: } K{p_n}^{2/3}&<KN^{2}.\\
\textup{$(51)$, $(52)$ and $(53)$ yield\: \: \: \:  } p_{n+1} &< N^3+KN^{2}.\\
p_{n+1}&< N^3 + 16^{1/3}N^2.\\
p_{n+1}&< N^3 +3N^2 +3N.
\end{align}
\begin{align}
N^3<p_{n+1}&<(N+1)^3-1 \text{\: \: \:  for all $N>16$.}
\end{align}
The presence of $(57)$ and a manual verification for $N\leq 16$ complete the argument.  


{\bf Received: April 21, 2020; Published: May 21, 2020}

\end{document}